\def\NZQ{\mathbb}               
\def\ZZ{{\NZQ Z}}
\def\RR{{\NZQ R}}
\def\frk{\mathfrak}               
\def\Phi{{\frk N}}
\def\ab{{\bold a}}
\def\eb{{\bold e}}
\def\tb{{\bold t}}
\def\xb{{\bold x}}
\def\opn#1#2{\def#1{\operatorname{#2}}} 
\opn\chara{char} 
\opn\length{\ell} 
\opn\pd{pd} 
\opn\rk{rk}
\opn\projdim{proj\,dim} 
\opn\injdim{inj\,dim} 
\opn\rank{rank}
\opn\depth{depth} 
\opn\grade{grade} 
\opn\height{height}
\opn\embdim{emb\,dim} 
\opn\codim{codim}
\opn\Tr{Tr} 
\opn\bigrank{big\,rank}
\opn\superheight{superheight}
\opn\lcm{lcm}
\opn\trdeg{tr\,deg}
\opn\reg{reg} 
\opn\lreg{lreg} 
\opn\ini{in} 
\opn\lpd{lpd}
\opn\size{size}
\opn\mult{mult}
\opn\dist{dist}
\opn\cone{cone}
\opn\lex{lex}
\opn\rev{rev}
\opn\codeg{codeg}
\opn\div{div} \opn\Div{Div} \opn\cl{cl} \opn\Cl{Cl}
\opn\Spec{Spec} \opn\Supp{Supp} \opn\supp{supp} \opn\Sing{Sing}
\opn\Ass{Ass} \opn\Min{Min}
\opn\Ann{Ann} \opn\Rad{Rad} \opn\Soc{Soc}
\opn\Syz{Syz} \opn\Im{Im} \opn\Ker{Ker} \opn\Coker{Coker}
\opn\Am{Am} \opn\Hom{Hom} \opn\Tor{Tor} \opn\Ext{Ext}
\opn\End{End} \opn\Aut{Aut} \opn\id{id} \opn\ini{in}
\opn\nat{nat}
\opn\pff{pf}
\opn\Pf{Pf} \opn\GL{GL} \opn\SL{SL} \opn\mod{mod} \opn\ord{ord}
\opn\Gin{Gin}
\opn\Hilb{Hilb}\opn\adeg{adeg}\opn\std{std}\opn\ip{infpt}
\opn\Pol{Pol}
\opn\sat{sat}
\opn\Var{Var}
\opn\Gen{Gen}
\opn\aff{aff} \opn\con{conv} \opn\relint{relint} \opn\st{st}
\opn\lk{lk} \opn\cn{cn} \opn\core{core} \opn\vol{vol}
\opn\link{link} \opn\star{star}
\opn\gr{gr}
\def\Pc{{\mathcal P}}
\def\pot#1#2{#1[\kern-0.28ex[#2]\kern-0.28ex]}
\opn\dirlim{\underrightarrow{\lim}}
\opn\inivlim{\underleftarrow{\lim}}
\let\to=\rightarrow
\def\Implies{\ifmmode\Longrightarrow \else
        \unskip${}\Longrightarrow{}$\ignorespaces\fi}
\def\implies{\ifmmode\Rightarrow \else
        \unskip${}\Rightarrow{}$\ignorespaces\fi}
\def\iff{\ifmmode\Longleftrightarrow \else
        \unskip${}\Longleftrightarrow{}$\ignorespaces\fi}
\newtheorem{Theorem}{Theorem}[section]
\newtheorem{Lemma}[Theorem]{Lemma}
\newtheorem{Conjecture}[Theorem]{Conjecture}
\let\epsilon\varepsilon
\let\phi=\varphi
\let\kappa=\varkappa
\def\qed{\ifhmode\textqed\fi
      \ifmmode\ifinner\quad\qedsymbol\else\dispqed\fi\fi}
\def\textqed{\unskip\nobreak\penalty50
       \hskip2em\hbox{}\nobreak\hfil\qedsymbol
       \parfillskip=0pt \finalhyphendemerits=0}
\def\dispqed{\rlap{\qquad\qedsymbol}}
\opn\dis{dis}
\opn\height{height}
\opn\dist{dist}
\def\pnt{{\raise0.5mm\hbox{\large\bf.}}}
\opn\Lex{Lex}
\begin{document}
\title{Edge rings of bipartite graphs with linear resolutions}
\author{Akiyoshi Tsuchiya}
\address[Akiyoshi Tsuchiya]
{Graduate school of Mathematical Sciences,
University of Tokyo,
Komaba, Meguro-ku, Tokyo 153-8914, Japan} 
\email{akiyoshi@ms.u-tokyo.ac.jp}\subjclass[2010]{05E40, 13H10, 52B20}
\keywords{edge ring, linear resolution, regularity, $h^*$-polynomial, edge polytope, root polytope}
\begin{abstract}
Ohsugi and Hibi characterized the edge ring of a finite connected simple graph with a $2$-linear resolution. On the other hand, Hibi, Matsuda and the author conjectured that the edge ring of a finite connected simple graph with a $q$-linear resolution, where $q \geq 3$, is a hypersurface and proved the case $q=3$. In the present paper, we solve this conjecture for the case of finite connected simple bipartite graphs. \end{abstract}
\maketitle
\section*{Introduction}
Let $S = K[x_{1}, \ldots, x_{n}]$ denote the polynomial ring in $n$ variables over a field $K$ with each $\deg x_{i} = 1$.  Let $I \subset S$ be a homogeneous ideal of $S$ and 
\[
0 \to \bigoplus_{j \geq 1} S(-j)^{\beta_{h, j}} \to \cdots \to \bigoplus_{j \geq 1} S(-j)^{\beta_{1, j}} \to S \to S/I \to 0
\]
a (unique) graded minimal free $S$-resolution of $S/I$.  The ({\em Castelnuovo-Mumford}\,) {\em regularity} of $S/I$ is 
\[
\reg (S/I) = \max\{ j - i : \beta_{i, j} \neq 0 \}. 
\] 
We say that $S/I$ has a {\em $q$-linear resolution} if $\beta_{i, j} = 0$ for each $1 \leq i \leq h$ and for each $j \neq q + i - 1$.  If $S/I$ has a $q$-linear resolution, then $\reg(S/I) = q - 1$ and $I$ is generated by homogeneous polynomials of degree $q$.  We refer the reader to, e.g., \cite{BH} and \cite{HHO} for the detailed information about regularity and linear resolutions.

The edge ring and the edge polytope of a finite connected simple graph together with its toric ideal has been studied by many articles.  Their foundation was established in \cite{OH, 2linear}. Moreover, several papers on the minimal free resolutions of edge rings are published, see e.g. \cite{BOV,GM,HKO,HMT,Koszulbipartite,2linear}. In particular, in \cite[Theorem 4.6]{2linear} it is shown that the edge ring $K[G]$, where $K$ is a field, of a finite connected simple graph $G$ on $[N] = \{ 1, \ldots, N \}$ has a $2$-linear resolution if and only if $K[G]$ is isomorphic to the polynomial ring in $N - \delta$ variables over the Segre product $K[x_1, x_2] \sharp K[y_1, \ldots, y_\delta]$ of two polynomial rings $K[x_1, x_2]$ and $K[y_1, \ldots, y_\delta]$, where $\delta$ is the normalized volume (\cite[p.~36]{Stu}) of the edge polytope $\Pc_G$ of $G$.  On the other hand, in \cite[Theorem 0.1]{HMT} it is shown that if the edge ring $K[G]$ of a finite connected simple graph $G$ has a $3$-linear resolution, then $K[G]$ is a hypersurface. Moreover, the following conjecture is given.
\begin{Conjecture}[{\cite[Conjecture 0.2]{HMT}}]
	\label{conj}
The edge ring of a finite connected simple graph with a $q$-linear resolution, where $q \geq 3$, is a hypersurface.
\end{Conjecture}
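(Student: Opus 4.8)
The plan is to reduce the conjecture to a single Betti-number vanishing and then to attack that vanishing through the combinatorics of even closed walks. Write $S$ for the polynomial ring on the edges of $G$ and $I_G\subset S$ for the toric ideal, so $K[G]=S/I_G$. Since $S$ is a unique factorization domain and $I_G$ is a nonzero prime, $K[G]$ is a hypersurface (i.e.\ $I_G$ principal) exactly when $\height I_G=1$, and—because in a domain an ideal is free if and only if it is principal—this happens precisely when $\pd_S(S/I_G)=1$, that is, $\beta_{i,j}=0$ for all $i\ge 2$. Under a $q$-linear resolution every second syzygy lives in degree $q+1$, so the whole conjecture reduces to
\[
\beta_{2,q+1}(S/I_G)=0 \qquad (q\ge 3).
\]
First I would note that this is equivalent to $I_G$ having a single generator: with one generator the resolution has length $1$, while with two degree-$q$ binomial generators the Koszul syzygy between them sits in degree $2q$, and since $2q>q+1$ for $q\ge 2$ it cannot be a minimal generator of a $q$-linear resolution; hence a second generator can survive only if it is bound to the others by genuine linear (degree $q+1$) syzygies. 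So the crux becomes controlling when such linear syzygies exist.

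Second, I would make the Betti numbers combinatorial. The minimal generators of $I_G$ are the binomials attached to primitive even closed walks, and a $q$-linear resolution forces every such walk to have length exactly $2q$. Using the standard description of toric Betti numbers as reduced simplicial homology of the fibers of the multidegree map (equivalently, of the squarefree divisor complexes in each semigroup degree), I would translate $\beta_{2,q+1}\ne0$ into the existence of two length-$2q$ walks $W_1,W_2$ whose union carries a nontrivial homology class in degree $q+1$. This is an overlap condition: the associated degree-$q$ binomials must have a least common multiple of degree only $q+1$, forcing $W_1$ and $W_2$ to share all but a bounded portion of their edges.

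The heart of the argument is then a rigidity lemma: for $q\ge 3$, two distinct primitive even closed walks of length $2q$ that overlap so tightly cannot coexist in a simple graph without either creating a shorter primitive walk—hence a generator of degree $<q$, contradicting generation in degree $q$—or collapsing to $W_1=W_2$. The contrast with $q=2$ is exactly the point: an overlap of a single edge among many $4$-cycles is loose enough to assemble the Segre configurations of the $2$-linear classification, whereas for $q\ge 3$ the mandatory overlap is so large that the walks are compelled to traverse a common long cycle, driving the cyclomatic number of $G$ down to the minimal value ($1$ in the bipartite case, $2$ in the non-bipartite case) that characterizes hypersurfaces. I would organize this either as an induction on the number of minimal generators, peeling off one generator at a time and using the settled cases $q=2$ and $q=3$ as anchors, or as a direct analysis of the fiber complexes.

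The main obstacle I expect is the non-bipartite combinatorics together with the possible non-normality of $K[G]$. In the bipartite case primitive even closed walks are honest even cycles and $K[G]$ is normal, so the fibers are governed by the edge (root) polytope and its $h^*$-polynomial; for general $G$ a primitive walk can run through two odd cycles joined by a path, the generating set is controlled by the full Graver basis rather than by cycles alone, and the loss of normality removes the Ehrhart-theoretic tools. Proving the rigidity lemma in this generality—ruling out the exotic odd-cycle configurations that could conceivably support a large overlap without collapsing the cycle space—is where the real difficulty lies, and is presumably why only the bipartite case is settled here.
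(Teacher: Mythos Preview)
The paper does not prove Conjecture~\ref{conj}; it is stated as an open problem and only the bipartite special case (Theorem~\ref{thm:main}) is established. So there is no ``paper's own proof'' of the full statement to compare against, and your closing sentence already anticipates this.

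For the bipartite case that the paper does settle, your route and the paper's diverge completely. The paper never touches $\beta_{2,q+1}$ or fiber complexes. Instead it exploits the inequality $\reg K[G]\ge\deg(\Pc_G)$ (Lemma~\ref{lem:reg}) together with Stanley monotonicity (Lemma~\ref{lem:deg}) and argues on the polytope side: assuming two $2q$-cycles exist, it runs a case split (disjoint; sharing exactly one vertex; sharing at least two vertices, the last reducing to a cycle with an attached path $G^{(e)}_{k,m}$ or $G^{(o)}_{k,m}$) and in each case writes down an explicit interior lattice point of a dilate of $\Pc_{G'}$ for a subgraph $G'$, forcing $\deg\Pc_G\ge q>q-1=\reg K[G]$. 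This Ehrhart-theoretic argument leans on normality and the root-polytope structure of bipartite edge rings, which is exactly why it does not extend to general $G$.

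Your plan, by contrast, stays on the syzygy side, and it has two genuine gaps beyond the acknowledged non-bipartite difficulties. The reduction to $\beta_{2,q+1}=0$ is correct but tautological: under the $q$-linear hypothesis it is literally the statement ``$I_G$ is principal,'' so nothing has been gained yet. The substantive steps are (i) translating $\beta_{2,q+1}\ne0$ into a two-walk overlap, and (ii) the rigidity lemma. For (i), a minimal linear syzygy among degree-$q$ binomials need not be supported on two generators, and ``least common multiple of degree $q+1$'' is a monomial-ideal notion that does not cleanly apply to binomials $u_i-v_i$; the homology class in the relevant fiber complex can involve several lattice points, so the passage to ``$W_1$ and $W_2$ share all but a bounded portion of their edges'' is unjustified as written. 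For (ii), even granting some overlap statement, the claim that tight overlap of two $2q$-walks forces a shorter primitive walk is precisely the combinatorial core of the problem, and you have not argued it even in the bipartite case. The paper's Lemmas~\ref{lem:disjoint}--\ref{lem:odd} are doing exactly this work by other (polytopal) means, and they require a nontrivial case analysis; asserting the conclusion as a lemma to be proved ``by induction or direct analysis'' is a plan, not a proof.
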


In the present paper, we solve Conjecture \ref{conj} for the case of bipartite graphs. In fact,
\begin{Theorem}
	\label{thm:main}
	The edge ring of a finite connected simple bipartite graph with a $q$-linear resolution, where $q \geq 3$, is a hypersurface.
\end{Theorem}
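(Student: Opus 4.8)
\medskip
\noindent\emph{Proof idea.}\quad
The plan is to turn the $q$-linear hypothesis into an upper bound on the degree of the $h^{*}$-polynomial of the edge polytope $\Pc_G$, and then to violate that bound by exhibiting an induced subgraph of $G$ whose edge ring has regularity $\ge q$. First, two observations. A $q$-linear resolution forces $I_G$ to be generated in degree $q$, with $\reg K[G]=q-1$; and since for a bipartite graph $I_G$ is generated by the binomials of its cycles, while a cycle of length $2\ell$ gives a nonzero binomial of degree $\ell$ that is a minimal generator when $\ell<q$, it follows that $G$ has girth $2q$. Second, $K[G]$ is a hypersurface exactly when $\codim I_G=1$, that is, when the cyclomatic number $c:=n-N+1$ of $G$ equals $1$; since $I_G\ne 0$ we have $c\ge 1$, so the task is to rule out $c\ge 2$.

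I would then reduce to the $2$-connected case. Let $B_1,\dots,B_s$ be the blocks of $G$ (its maximal $2$-connected subgraphs and its bridges); every cycle lies in one block, so $K[G]\cong K[B_1]\otimes_K\cdots\otimes_K K[B_s]$ with the $I_{B_i}$ on pairwise disjoint variable sets, and the minimal free resolution of $K[G]$ is the tensor product of those of the $K[B_i]$. A $q$-linear resolution then forces each $I_{B_i}$ to be zero or generated in degree $q$, and forbids two of them to be nonzero at once (else $\Tor_2^{S}(K[G],K)$ would have a class of internal degree $2q\ne q+1$). So a single block $B^{*}$ carries all the relations; it is a connected bipartite graph of girth $2q$ with the same cyclomatic number $c$, and $K[B^{*}]$ has a $q$-linear resolution. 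If $c=1$ we are done; otherwise, replacing $G$ by $B^{*}$, we may assume $G$ is $2$-connected, bipartite, of girth $2q$, with $c\ge 2$, and we look for a contradiction.

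The geometric core: since $G$ is $2$-connected and not a single cycle, I would pick a shortest cycle $C$ (length $2q$, hence chordless) and then a shortest ear $P$ meeting $C$ only in its two endpoints, of length $r$; if $p$ and $2q-p$ are the lengths of the two arcs of $C$ that $P$ subtends, the girth bound forces $r\ge\max(p,2q-p)\ge q\ge 3$, and bipartiteness forces $p,2q-p,r$ to share a parity. The first delicate step is to check, using minimality of $C$ and of $P$, that $\Theta:=C\cup P$ has no chord in $G$ --- any such chord would create a cycle of length $<2q$ or an ear of length $<r$ --- so that $\Theta$ embeds as an \emph{induced} theta subgraph on $N(\Theta)=2q+r-1$ vertices. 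Now regularity is monotone along induced subgraphs of a bipartite graph: for induced $H\subseteq G$, $\Pc_H=\Pc_G\cap\{x_v=0:v\notin V(H)\}$ is a face of $\Pc_G$, so Stanley's monotonicity gives $h^{*}(\Pc_H)\le h^{*}(\Pc_G)$ coefficientwise, and since bipartite edge rings are normal, hence Cohen--Macaulay, $\reg K[H]=\deg h^{*}(\Pc_H)\le\deg h^{*}(\Pc_G)=\reg K[G]=q-1$. In particular $\reg K[\Theta]\le q-1$.

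Finally I would prove $\reg K[\Theta]\ge q$, which contradicts this. Weighting the edges along each of the three paths of $\Theta$ by an alternating pattern $a_i,1-a_i,a_i,\dots$ with $0<a_i<1$ and $a_1+a_2+a_3$ a feasible positive integer (one may take it to be $1$) makes every vertex-degree a positive integer, so the degree vector equals $\sum_e\lambda_e v_e$ with all edge-weights $\lambda_e>0$ summing to $k=q+\lceil(r-1)/2\rceil$, hence is a lattice point in the relative interior of $k\,\Pc_\Theta$. As $r\ge 3$, this yields $\codeg(\Pc_\Theta)\le q+\lceil(r-1)/2\rceil\le q+r-2=\dim\Pc_\Theta+1-q$, so $\deg h^{*}(\Pc_\Theta)\ge q$, i.e.\ $\reg K[\Theta]\ge q$. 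Thus $c=1$, and $K[G]$ is a hypersurface. I expect the two steps needing the most care to be (i) the \emph{induced}-ness of the theta subgraph --- this is exactly where shortest-ness of $C$ and $P$ is used, to kill every potential chord via the girth --- and (ii) the lattice-point construction certifying $\codeg(\Pc_\Theta)<N(\Theta)-q$; the remaining ingredients --- normality and Cohen--Macaulayness of bipartite edge rings, $\reg=\deg h^{*}$ for Cohen--Macaulay Ehrhart rings, Stanley's monotonicity of $h^{*}$-vectors, and the behaviour of minimal resolutions under tensor products --- are standard.
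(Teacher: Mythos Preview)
Your argument is correct and rests on the same core as the paper's: locate a subgraph $G'$ with $\deg h^{*}(\Pc_{G'})\ge q$, then use Stanley's monotonicity together with $\reg K[G]\ge\deg h^{*}(\Pc_G)$ to contradict $\reg K[G]=q-1$. Your interior--lattice--point computation for the theta graph is exactly the paper's Lemmas~2.3--2.4 (your weights $a_i,1-a_i$ with $a_1+a_2+a_3=1$ specialize to the paper's explicit $\tfrac13,\tfrac23$ pattern, and your bound $k=q+\lceil(r-1)/2\rceil$ matches the paper's $q+m$ and $q+m-1$).

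Two differences are worth noting. First, your block decomposition is a genuine streamlining: once you observe that for a bipartite graph the cycle binomials generating $I_G$ live in a single block each, the tensor--product argument collapses the paper's Lemmas~2.1 (disjoint $2q$-cycles) and~2.2 (cycles sharing one vertex) into a single line, leaving only the $2$-connected theta case. Second, your concern about proving $\Theta$ is \emph{induced} is unnecessary: the paper's Lemma~1.3 (Stanley's monotonicity) gives $\deg h^{*}(\Pc_{G'})\le\deg h^{*}(\Pc_G)$ for an \emph{arbitrary} subgraph $G'\subseteq G$, since $\Pc_{G'}\subseteq\Pc_G$ as lattice polytopes, so you can drop the chord analysis entirely. (Your inducedness argument is correct, but it is extra work you do not need.)
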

The edge rings of bipartite graphs and the associated lattice polytopes, which are called the edge polytopes (or root polytopes) play particularly important roles in commutative algebra and combinatorics, e.g., see \cite{KalPos,Koszulbipartite}.

In the present paper, after preparing necessary materials on edge polytopes and edge rings (Section \ref{sec:edge}), Theorem \ref{thm:main} will be proved in Section \ref{sec:proof}.

\subsection*{Acknowledgment}
The author was partially supported by JSPS KAKENHI 19K14505 and 19J00312.

\section{Edge polytopes and Edge rings}
\label{sec:edge}
A {\em lattice polytope} is a convex polytope all of whose coordinates have integer coordinates.  Let $\Pc \subset \RR^N$ be a lattice polytope of dimension $d$ and $\Pc \cap \ZZ^N=\{\ab_1,\ldots,\ab_n \}$.  Let $K$ be a field and $K[\tb^{\pm1}, s]=K[t_1^{\pm1},\ldots,t_N^{\pm 1},s]$ the Laurent polynomial ring in $N + 1$ variables over $K$.  Given a lattice point  $\ab=(a_1,\ldots,a_N) \in \ZZ^N$, we write $\tb^{\ab}$ for the Laurent monomial $t_1^{a_1}\cdots t_N^{a_N} \in K[\tb^{\pm 1}, s]$.  The \textit{toric ring} $K[\Pc]$ of $\Pc$ is the subalgebra of $K[\tb^{\pm 1},s]$ which is generated by those monomials $\tb^{\ab_1}s,\ldots,\tb^{\ab_n}s$ over $K$.  Let $K[\xb]=K[x_1,\ldots,x_n]$ denote the polynomial ring in $n$ variables over $K$ with each $\deg x_i=1$, and define the surjective ring homomorphism $\pi:K[\xb] \to K[\Pc]$ by setting $\pi(x_i)=\tb^{\ab_i}s$ for $1 \leq i \leq n$.  The kernel $I_{\Pc}$ of $\pi$ is called the \textit{toric ideal} of $\Pc$.

Let $G$ be a finite simple graph on the vertex set $V(G)=[N]$ with the edge set $E(G)=\{e_1,\ldots,e_n\}$.  (A finite graph $G$ is called simple if $G$ possesses no loop and no multiple edge.) Let $\eb_1,\ldots,\eb_N$ denote the canonical unit coordinate vectors of $\RR^N$.  Given an edge $e=\{i,j\}$ of $G$, we set $\rho(e)=\eb_i+\eb_j \in \RR^{N}$.  The \textit{edge polytope}  $\Pc_G$ of $G$ is the lattice polytope which is the convex hull of $\{\rho(e_1),\ldots,\rho(e_n)\}$ in $\RR^N$.  
The edge polytope $\Pc_G$ of a finite simple bipartite graph $G$ is also called the \textit{root polytope} of $G$. 
One has $\dim \Pc_G = N - c_0(G)-1$, where $c_0(G)$ is the number of connected bipartite components of $G$ (\cite[p. 57]{VV}). In particular, if $G$ is connected and bipartite, then $\dim \Pc_G = N - 2$.
%
%
%
The \textit{edge ring} $K[G]$ of $G$ is the toric ring of $\Pc_G$, that is, $K[G]=K[\Pc_G]$ and the toric ideal $I_G$ of $K[G]$ is the toric ideal of $\Pc_{G}$, that is, $I_{G} = I_{\Pc_{G}}$. 
When the edge ring $K[G]$ of a finite simple graph $G$ is studied, we follow the convention of assuming that $G$ is connected.  Let $G$ be a finite disconnected simple graph with the connected components $G_1, G_2, \ldots, G_s$ and suppose that each $G_i$ has at least one edge.  Then the edge ring of $G$ is 
$
K[G] = K[G_{1}] \otimes_{K} \cdots \otimes_{K} K[{G_{s}}]
$ and its toric ideal is
\[
(I_{G_1}, I_{G_2}, \ldots, I_{G_s})
\subset K[x_1^{(1)}, \ldots, x_{n_1}^{(1)}, x_1^{(2)}, \ldots, x_{n_2}^{(2)}, \ldots, x_1^{(s)}, \ldots, x_{n_s}^{(s)}].
\]
Let, say, $I_{G_1} \neq (0)$ and $I_{G_2} \neq (0)$.  Then $K[G]$ cannot have a linear resolution.  Hence $K[G]$ has a $q$-linear resolution if and only if there is $1 \leq i \leq s$ for which $K[G_i]$ has a $q$-linear resolution and each $K[G_j]$ with $i \neq j$ is the polynomial ring.

Recall from \cite{2linear} what a system of generators of the toric ideal $I_G$ of a finite connected simple bipartite graph is.  
Let $C$ be an even cycle of $G$ with the edge set 
\[
E(C)=\{e_{i_1},\ldots,e_{i_{2n}}\},
\]
where for $1 \leq j \leq 2n-1$, $e_{i_j}=\{v_{j},v_{j+1}\}$ and $e_{i_{2n}}=\{v_{2n},v_{1}\}$.
We write $f_C$ for the binomial
\[
f_C=\prod_{j=1}^{n}x_{i_{2j-1}}-\prod_{j=1}^{n}x_{i_{2j}}
\]
belonging to $I_G$, where $\pi(x_i)=\tb^{\rho(e_i)}s$.
\begin{Lemma}[{\cite[Corollary 5.12]{HHO}}]
	The toric ideal $I_G$ of a finite connected simple bipartite graph $G$ is generated by those binomials $f_{C}$, where $C$ is an even cycle of $G$.
\end{Lemma}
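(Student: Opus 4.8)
The plan is to prove this standard combinatorial fact by reducing an arbitrary element of $I_G$ to the even-cycle binomials $f_C$ through an induction on degree. I would start from the general principle that a toric ideal is spanned as a $K$-vector space by the pure binomials $\xb^{u} - \xb^{v}$ with $u, v \in \ZZ_{\geq 0}^n$ and $\pi(\xb^u) = \pi(\xb^v)$. Since $\pi(\xb^u) = \tb^{\sum_i u_i \rho(e_i)} s^{\sum_i u_i}$, the condition $\pi(\xb^u)=\pi(\xb^v)$ is exactly $\sum_i u_i \rho(e_i) = \sum_i v_i \rho(e_i)$ in $\ZZ^N$. Reading $u$ and $v$ as multisets $U$ and $V$ of edges of $G$, this says precisely that $U$ and $V$ cover every vertex of $G$ the same number of times. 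After cancelling any variable dividing both monomials (which lowers the degree and preserves membership in $I_G$), I may assume $U$ and $V$ are disjoint as edge multisets, so it suffices to place each such binomial in the ideal generated by the $f_C$.

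The core step is to extract an even cycle from this equal-covering data. I would fix a bipartition $V(G) = A \sqcup B$ and assume $\xb^u - \xb^v \neq 0$, so that $U \neq \emptyset$. Starting from a $U$-edge $\{v_1, v_2\}$ with $v_1 \in A$, I build an alternating walk $v_1, v_2, v_3, \ldots$ whose edges are drawn from $U$ and $V$ in alternation (a $U$-edge, then a $V$-edge, then a $U$-edge, and so on). The walk can always be continued: letting $d_U(w)$ and $d_V(w)$ denote the total $U$- and $V$-degree at a vertex $w$, the identity $d_U(w)=d_V(w)$ together with the fact that a newly reached vertex has so far been touched by exactly one walk edge (of the opposite type) guarantees an unused edge of the required type at $w$. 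As $G$ is finite, the walk must eventually repeat a vertex, and the segment between the two occurrences is a simple cycle $C$. Bipartiteness forces $C$ to have even length, and a parity check on the positions shows that its edges alternate perfectly between $U$ and $V$; thus $C$ is an even cycle whose odd-position edges lie in $U$ and whose even-position edges lie in $V$, and its binomial $f_C = m_U - m_V$ satisfies $m_U \mid \xb^u$ and $m_V \mid \xb^v$.

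Finally I would run the induction. Writing $\xb^u = m_U \xb^{u'}$ and $\xb^v = m_V \xb^{v'}$, the identity
\[
\xb^u - \xb^v = \xb^{u'} f_C + m_V(\xb^{u'} - \xb^{v'})
\]
reduces the problem to $\xb^{u'} - \xb^{v'}$, which again lies in $I_G$ (because $f_C \in I_G$ forces $m_U$ and $m_V$ to have equal image, hence so do $\xb^{u'}$ and $\xb^{v'}$) and has strictly smaller degree. By the inductive hypothesis $\xb^{u'}-\xb^{v'}$ lies in the ideal generated by the $f_C$, and therefore so does $\xb^u - \xb^v$. The main obstacle is the extraction step: one must argue carefully that the alternating walk never gets stuck before closing up and that the resulting cycle is genuinely $U/V$-alternating. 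Both points rest on bipartiteness (every closed walk is even) and on the equal-covering condition (the two types are balanced at every vertex), which is exactly where the hypothesis that $G$ is bipartite enters.
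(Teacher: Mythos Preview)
The paper does not supply its own proof of this lemma: it is quoted verbatim as \cite[Corollary 5.12]{HHO} and used as a black box, so there is no in-paper argument to compare against. Your proposal is the standard proof of this classical fact and is correct; the degree induction together with the alternating-walk extraction of an even cycle is exactly how the result is established in the literature (including in the cited textbook).

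Two small points worth tightening if you keep this as a self-contained proof. First, the walk-continuation argument is fine at genuinely new vertices, but you should say explicitly that the process terminates by stopping at the \emph{first} repeated vertex; then $v_i,\ldots,v_{k-1}$ are distinct, so the extracted cycle is automatically simple and its edges are pairwise distinct. Second, the case of a length-$2$ closure (i.e.\ $k=i+2$) is excluded not by bipartiteness but by your earlier reduction: after cancelling $\gcd(\xb^u,\xb^v)$ the supports of $U$ and $V$ are disjoint, so the same edge cannot serve once as a $U$-edge and once as a $V$-edge. With those remarks the argument is complete, and bipartiteness is used precisely where you say, to force $k-i$ even so that the cycle alternates cleanly between $U$ and $V$.
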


As a result, it follows that

\begin{Lemma}\label{lem:gen}

Let $G$ be a finite connected simple bipartite graph. Assume that $I_G$ is generated by homogeneous polynomials of degree $n$. Then $G$ has no even cycles of length $< 2n$. In particular, $I_G$ is generated by $\{f_{C} : C \mbox{ is an even cycle of length } 2n\}$.
\end{Lemma}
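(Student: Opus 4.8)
The plan is to prove both assertions together, the first being the essential one. Suppose, for contradiction, that $G$ contains an even cycle $C$ of length $2m$ with $m < n$. Then by the lemma just quoted (\cite[Corollary 5.12]{HHO}), the binomial $f_C \in I_G$ is a binomial of degree $m < n$. Since $I_G$ is a prime ideal (being a toric ideal), $f_C$ is not a unit and is a nonzero element of $I_G$. I would then argue that $f_C$ forces a minimal generator of degree $\le m$: concretely, $f_C$ lies in $I_G = (g_1, \ldots, g_r)$ with each $g_k$ homogeneous of degree $n$, so $f_C = \sum_k h_k g_k$ with $h_k \in S$ homogeneous of degree $m - n < 0$, which is impossible unless all $h_k = 0$, i.e.\ $f_C = 0$ --- a contradiction, since $f_C$ is a nonzero binomial (its two monomials $\prod_j x_{i_{2j-1}}$ and $\prod_j x_{i_{2j}}$ are distinct, as $C$ has no repeated edges). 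Hence no even cycle of length $< 2n$ exists.

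For the second assertion, once we know $G$ has no even cycle of length $< 2n$, the generating set from \cite[Corollary 5.12]{HHO} consists of the binomials $f_C$ with $C$ an even cycle, each of which has degree $\operatorname{length}(C)/2 \ge n$. I would then observe that among these generators, only those of degree exactly $n$ can be minimal generators: since $I_G$ is generated in degree $n$ (by hypothesis) and the minimal generators all lie in degree $n$, any $f_C$ with $\operatorname{length}(C) > 2n$ already lies in the ideal generated by the degree-$n$ part $(I_G)_n$, which by the cycle description is spanned by the $f_C$ with $\operatorname{length}(C) = 2n$ together with products $x_i \cdot (\text{lower-degree generator})$ --- but there are no lower-degree generators, so $(I_G)_n$ is spanned exactly by the $f_C$ with $\operatorname{length}(C) = 2n$. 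Therefore $I_G = ((I_G)_n) = (\{ f_C : \operatorname{length}(C) = 2n\})$.

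I expect the only subtlety is making the last step fully rigorous: one must be sure that the $K$-span of $\{f_C : \operatorname{length}(C) = 2n\}$ really equals the whole degree-$n$ graded piece $(I_G)_n$, rather than merely generating $I_G$ as an ideal together with higher-degree corrections. This follows because $I_G$ being generated in degree $n$ means $(I_G)_n$ generates $I_G$, and $(I_G)_n$ is a finite-dimensional $K$-vector space on which the binomials $f_C$ (over \emph{all} even cycles $C$) act as a spanning set after reduction; since only the length-$2n$ cycles contribute elements of degree $n$ (cycles of length $> 2n$ give binomials of degree $> n$, and there are none of length $< 2n$), they span $(I_G)_n$. An alternative, perhaps cleaner route avoiding degree bookkeeping is: take a minimal generating set of $I_G$; each minimal generator has degree $n$; express it in the $f_C$-basis from \cite[Corollary 5.12]{HHO}, noting every $f_C$ appearing must itself have degree $\le n$, hence degree exactly $n$, hence $\operatorname{length}(C) = 2n$. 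Either way, the graph-theoretic input (no short even cycles) is what does the real work, and the ideal-theoretic manipulations are routine.
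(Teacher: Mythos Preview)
Your argument is correct and is exactly the routine degree argument the paper has in mind: in the paper this lemma is stated immediately after \cite[Corollary~5.12]{HHO} with only the phrase ``As a result, it follows that'' and no further proof. Your write-up simply spells out the obvious steps---a nonzero $f_C$ of degree $m<n$ cannot lie in an ideal generated in degree $n$, and then the degree-$n$ graded piece $(I_G)_n$ is the $K$-span of the $f_C$ with $\operatorname{length}(C)=2n$---so there is nothing to compare; you have filled in precisely what the author left implicit.
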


Next, we recall Ehrhart theory of lattice polytopes. Let $\Pc \subset \RR^N$ be a lattice polytope of dimension $d$.  The {\em $h^*$-polynomial} (or {\em $\delta$-polynomial}) of $\Pc$ is the polynomial
\[
h^*(\Pc,\lambda)=(1-\lambda)^{d+1} \left[ 1+ \sum_{t=1}^{\infty} |t\Pc \cap \ZZ^N | \lambda^t \right]
\]
in $\lambda$, where $t\Pc=\{t \ab : \ab \in \Pc \}$.  Each coefficient of $h^*(\Pc, \lambda)$ is a nonnegative integer and the degree of $h^*(\Pc, \lambda)$ is at most $d$. Let $\deg(\Pc)$ denote the degree of $h^*(\Pc,\lambda)$ and set $\codeg(\Pc)=d+1-\deg(\Pc)$.
It then follows that
\[
\codeg(\Pc)= \min \{r \in \ZZ_{\geq 1} : \text{int}(r\Pc)\cap \ZZ^{N} \neq \emptyset \},
\]
where $\text{int}(\Pc)$ is the relative interior of $\Pc$ in $\RR^N$ and where $\ZZ_{\geq 1}$ stands for the set of positive integers.  We refer the reader to \cite[Part II]{HibiRedBook} for the detailed information about $\delta$-polynomials and their related topics.

From 
\cite{S}  we obtain the following:
\begin{Lemma}[{\cite[Corollary 3.2]{HMT}}]
	\label{lem:deg}
	Let $G$ be a finite connected simple graph and let $G' \subset G$ be a  subgraph of $G$.  Then $\deg(\Pc_{G'}) \leq \deg (\Pc_{G})$. 
\end{Lemma}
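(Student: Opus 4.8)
The plan is to obtain this as an immediate consequence of the monotonicity theorem for $\delta$-polynomials of lattice polytopes proved in \cite{S}; this is essentially the content of the cited \cite[Corollary 3.2]{HMT}.

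The first step is to record the geometric containment $\Pc_{G'}\subseteq\Pc_G$. Since $G'$ is a subgraph of $G$ we have $E(G')\subseteq E(G)$, and for every $e\in E(G')$ the vector $\rho(e)=\eb_i+\eb_j$ is the same point of $\ZZ^N$ whether $e$ is viewed as an edge of $G'$ or of $G$; hence $\Pc_{G'}\subseteq\Pc_G$ as lattice polytopes in $\RR^N$. (One also notes that the $\delta$-polynomial of $\Pc_{G'}$ is unaffected by this embedding, since the coordinates indexed by $V(G)\setminus V(G')$ vanish identically on every dilate $t\Pc_{G'}$.) The monotonicity theorem asserts that if $\Pc\subseteq\Qc$ are lattice polytopes then $\delta_i(\Pc)\le\delta_i(\Qc)$ for all $i\ge 0$, where $\delta_i$ denotes the $i$-th coefficient of the $\delta$-polynomial. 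Applying this with $\Pc=\Pc_{G'}$ and $\Qc=\Pc_G$ and using the nonnegativity of these coefficients, we obtain
\[
\deg(\Pc_{G'})=\max\{\,i:\delta_i(\Pc_{G'})\ne 0\,\}\ \le\ \max\{\,i:\delta_i(\Pc_G)\ne 0\,\}=\deg(\Pc_G),
\]
which is the assertion.

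The only non-formal ingredient is the monotonicity theorem itself, which I would treat as a black box; the one point I would pin down against the precise statement in \cite{S} is that it holds without assuming $\Pc$ and $\Qc$ equidimensional, since here one typically has $\dim\Pc_{G'}<\dim\Pc_G$. If one prefers to invoke only the equidimensional form of the theorem, a short reduction suffices, at least in the bipartite situation needed in this paper: enlarge $G'$ inside $G$ to a connected spanning subgraph $\widetilde G$ by repeatedly adjoining an edge of $G$ that either reaches a vertex not yet present or links two of the current connected components. Each such step is a polynomial-ring extension of the edge ring in one variable (the new edge being a pendant edge, resp.\ a bridge), hence leaves the $\delta$-polynomial unchanged, while $\dim\Pc_{\widetilde G}=N-2=\dim\Pc_G$ once $\widetilde G$ is connected and spanning (it is automatically bipartite, being a subgraph of $G$). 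Then the equidimensional monotonicity applied to $\Pc_{\widetilde G}\subseteq\Pc_G$ yields $\deg(\Pc_{G'})=\deg(\Pc_{\widetilde G})\le\deg(\Pc_G)$.
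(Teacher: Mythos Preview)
Your proposal is correct and matches the paper's approach: the paper does not give a proof but simply attributes the statement to \cite[Corollary 3.2]{HMT}, noting that it follows from Stanley's monotonicity theorem \cite{S}, which is exactly what you do. Your caution about equidimensionality is unnecessary, since Stanley's theorem in \cite{S} is stated and proved for arbitrary lattice polytopes $\Pc\subseteq\Qc$ without any assumption that $\dim\Pc=\dim\Qc$; thus the workaround via a spanning connected subgraph, while correct in the bipartite case, is not needed.
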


On the other hand, from {\cite[p. 5952]{HKN}}
one has the following:
\begin{Lemma}[{\cite[Corollary 3.4]{HMT}}]
	\label{lem:reg}
	Let $G$ be a finite connected simple graph on $[N]$.
	Then 
	\[
	\reg (K[\Pc_G]) \ge \deg (\Pc_G).
	\] 
\end{Lemma}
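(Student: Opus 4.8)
The plan is to bound $\reg(K[\Pc_G])$ from below by its top local cohomology and to identify that cohomology combinatorially through Ehrhart reciprocity. Set $R := K[\Pc_G]$ and $d := \dim \Pc_G$, so $R$ is a graded domain of Krull dimension $d+1$. I would use the local-cohomology description of regularity, $\reg(R) = \max_i\{a_i(R) + i\}$, where $a_i(R) = \max\{t : \Coh{i}{R}_t \neq 0\}$. Since $\Coh{d+1}{R} \neq 0$ by Grothendieck non-vanishing, this already yields the crucial inequality
\[
\reg(R) \geq a_{d+1}(R) + (d+1).
\]
Because $\codeg(\Pc_G) = (d+1) - \deg(\Pc_G)$ is recorded in the excerpt, it therefore suffices to prove the single non-vanishing statement $a_{d+1}(R) \geq -\codeg(\Pc_G)$, which then gives $\reg(R) \geq -\codeg(\Pc_G) + (d+1) = \deg(\Pc_G)$.

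First I would pass to the normalization $A$ of $R$, realized as the affine semigroup ring attached to the saturation of the semigroup generated by the lifted lattice points $(\rho(e),1)$. Being a normal affine semigroup ring, $A$ is Cohen--Macaulay by Hochster's theorem; it is module-finite over $R$ with the same field of fractions, so $\dim(A/R) \leq d$. From the short exact sequence $0 \to R \to A \to A/R \to 0$ and the vanishing $\Coh{d+1}{A/R} = 0$ (forced by $\dim(A/R) \le d < d+1$), the long exact sequence in local cohomology produces a degree-preserving surjection $\Coh{d+1}{R} \twoheadrightarrow \Coh{d+1}{A}$. Consequently $a_{d+1}(R) \geq a_{d+1}(A)$, and the task reduces to computing the top $a$-invariant of the normal ring $A$.

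For $A$ I would invoke the Danilov--Stanley description of the graded canonical module $\omega_A$ as the ideal spanned by the interior lattice points of the defining cone: its graded piece in height $t$ counts the interior lattice points of $t\Pc_G$, so the least degree in which $\omega_A$ is nonzero is exactly $\codeg(\Pc_G)$. Graded local duality then gives $\Coh{d+1}{A}_s \neq 0$ if and only if $(\omega_A)_{-s} \neq 0$, whence $a_{d+1}(A) = -\codeg(\Pc_G)$. Chaining the three steps completes the argument:
\[
\reg(R) \geq a_{d+1}(R) + (d+1) \geq a_{d+1}(A) + (d+1) = -\codeg(\Pc_G) + (d+1) = \deg(\Pc_G).
\]

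The main obstacle is the combinatorial input in the last step, and specifically the matching of lattices. The canonical module of $A$ counts interior points with respect to the lattice actually generated by the vectors $(\rho(e),1)$, whereas $\codeg(\Pc_G)$ and $\deg(\Pc_G)$ are defined through $\ZZ^N$. I would therefore need to know that $\Pc_G$ is lattice-spanning in its affine hull, i.e.\ that the affine lattice generated by $\{\rho(e) : e \in E(G)\}$ equals $\ZZ^N \cap \mathrm{aff}(\Pc_G)$, so that $A$ is genuinely the Ehrhart ring of $\Pc_G$ and the interior counts agree with $|\mathrm{int}(t\Pc_G)\cap\ZZ^N|$. This holds for every connected edge polytope by a short case analysis: on each dilate the affine hull is cut out by $\sum_i b_i = 2t$ in the non-bipartite case, and by the two colour classes each summing to $t$ in the bipartite case, and in both situations these defining equations force every lattice point of $t\Pc_G$ to lie in the lattice generated by the $\rho(e)$. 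Verifying this lattice identity cleanly, together with supplying the standard local-duality and Danilov--Stanley facts, is where the real work lies; the homological skeleton above is then formal.
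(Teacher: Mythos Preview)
The paper does not give its own proof of this lemma: it is quoted verbatim as \cite[Corollary~3.4]{HMT}, which in turn is derived from \cite[p.~5952]{HKN}. So there is nothing in the present paper to compare against beyond the citation chain.

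Your argument is correct and is in fact precisely the content behind those citations. The result in \cite{HKN} is that for a \emph{spanning} lattice polytope $\Pc$ one has $\reg(K[\Pc]) \geq \deg(\Pc)$, proved exactly along the lines you outline: bound $\reg$ from below by $a_{d+1}(R)+(d+1)$, pass to the normalization $A$ via the surjection $\Coh{d+1}{R}\twoheadrightarrow\Coh{d+1}{A}$, and then read off $a_{d+1}(A)=-\codeg(\Pc)$ from Danilov--Stanley and local duality. The specialization in \cite{HMT} consists of checking that edge polytopes of connected graphs are spanning, which is the lattice identity you isolate at the end; your sketch of that verification (splitting into the bipartite and non-bipartite affine-hull descriptions and using connectivity) is the standard one. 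So your proposal is not a different route but rather an unpacking of the cited sources; the only thing to tighten is to state cleanly that $A$ is the Ehrhart ring of $\Pc_G$ once spanning is established, after which the Danilov--Stanley computation of $\omega_A$ literally counts $|\mathrm{int}(t\Pc_G)\cap\ZZ^N|$ as you want.
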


\section{Proof of main theorem}
\label{sec:proof}
Let $G$ be a finite connected simple bipartite graph on $[N]$ such that $K[G]$ has a $q$-linear resolution with $q \geq 3$. 
Then it follows that ${\rm reg}(K[G])=q-1$ and $I_G$ is generated by homogeneous polynomials of degree $q$. Hence in order to prove Theorem \ref{thm:main}, from Lemma \ref{lem:gen}, we should show that	 if $G$ has no even cycles of length $< 2q$ and has at least two even cycles of length $2q$,
	then we obtain ${\rm reg}(K[G]) \geq q$.

\begin{Lemma}
\label{lem:disjoint}
	If $G$ has disjoint two even cycles of length $2q$, then we obtain $\deg \Pc_G \geq q$.
\end{Lemma}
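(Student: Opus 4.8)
The goal is to produce a lattice point in the relative interior of $(q-1)\Pc_G$, which by the formula for $\codeg$ shows $\codeg(\Pc_G) \le q-1$, hence $\deg(\Pc_G) = \dim\Pc_G + 1 - \codeg(\Pc_G) \ge (N-2) + 1 - (q-1) = N - q$. Wait — that bound is not obviously $\ge q$. So instead the plan is to work with the subpolytope spanned by the two disjoint cycles and invoke Lemma \ref{lem:deg}: it suffices to find a subgraph $G'$ (namely the union of the two disjoint $2q$-cycles) with $\deg(\Pc_{G'}) \ge q$, and then $\deg(\Pc_G) \ge \deg(\Pc_{G'})$.

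Concretely, let $C$ and $C'$ be the two disjoint even cycles of length $2q$, on disjoint vertex sets, say $C$ on $\{v_1,\dots,v_{2q}\}$ and $C'$ on $\{w_1,\dots,w_{2q}\}$. Put $G' = C \cup C'$, a disconnected bipartite graph with two components each of which is a single $2q$-cycle. Then $K[G']= K[C]\otimes_K K[C']$, and $I_{G'}$ is generated by the two binomials $f_C, f_{C'}$, each of degree $q$; in fact $K[G']$ is a complete intersection of two hypersurfaces of degree $q$ in a polynomial ring in $4q$ variables. For such a complete intersection the graded minimal free resolution is the Koszul complex on $f_C, f_{C'}$, so $\reg(K[G']) = 2(q-1)$ and, more to the point, $\deg(\Pc_{G'}) $ can be computed directly. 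Actually the cleanest route: the $h^*$-polynomial of a single $2q$-cycle $C$ is known — $\Pc_C$ has $\dim = 2q-2$ and $h^*(\Pc_C,\lambda) = 1 + \lambda + \cdots + \lambda^{q-1}$ (its degree is $q-1$, since the toric ring $K[C]$ is a hypersurface of degree $q$ and, being a $(2q-2)$-dimensional Cohen–Macaulay domain of the right multiplicity, its $h^*$-vector is $(1,1,\dots,1)$ with $q$ ones). The $h^*$-polynomial of a free join / the polytope $\Pc_{G'}$ associated to a disjoint union multiplies: $h^*(\Pc_{G'},\lambda) = h^*(\Pc_C,\lambda)\cdot h^*(\Pc_{C'},\lambda)$, because the edge ring of the disjoint union is the tensor product and the join of polytopes multiplies $h^*$-polynomials. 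Hence $h^*(\Pc_{G'},\lambda) = (1+\lambda+\cdots+\lambda^{q-1})^2$, which has degree $2(q-1) = 2q-2 \ge q$ (using $q\ge 2$). Therefore $\deg(\Pc_{G'}) = 2q-2 \ge q$, and Lemma \ref{lem:deg} gives $\deg(\Pc_G)\ge \deg(\Pc_{G'}) \ge q$.

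The steps, in order: (i) reduce to $G' = C\sqcup C'$ via Lemma \ref{lem:deg}; (ii) identify $K[C]$ as a hypersurface of degree $q$ (from Lemma \ref{lem:gen}, $I_C = (f_C)$ since $C$ has no even cycle of length $<2q$ and exactly one of length $2q$ up to the single relation), and deduce $h^*(\Pc_C,\lambda)=1+\lambda+\cdots+\lambda^{q-1}$ from the Hilbert series $\frac{1-\lambda^q}{(1-\lambda)^{2q-1}}$ of $K[C]$; (iii) show $h^*$-polynomials multiply under disjoint union of graphs (equivalently, $K[G_1\sqcup G_2]=K[G_1]\otimes_K K[G_2]$ and the Hilbert series multiply, noting $\dim K[G'] = \dim K[C] + \dim K[C']$ since both cycles are odd... no, even — each contributes dimension $2q-1$ as a $K$-algebra, so $\dim K[G'] = 4q-2 = (2q-2)+(2q-2)+2$, matching the join), so $h^*(\Pc_{G'},\lambda)=(1+\cdots+\lambda^{q-1})^2$; (iv) read off $\deg(\Pc_{G'}) = 2q-2 \ge q$ and conclude.

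The main obstacle is step (iii): making precise that passing from two disjoint graphs to the tensor product of edge rings corresponds, on the level of lattice polytopes, to a construction whose $h^*$-polynomial is the product — i.e. that $\codeg(\Pc_{G'})= \codeg(\Pc_C)+\codeg(\Pc_{C'})$ and the Hilbert series of $K[G']=K[C]\otimes_K K[C']$ is the product of the individual Hilbert series, with dimensions adding as $\dim K[G'] = \dim K[C]+\dim K[C']$. This is standard (tensor product of graded $K$-algebras multiplies Hilbert series and adds Krull dimensions), but one must carefully track the shift in the relationship between $\dim K[\Pc]$ and $\dim\Pc$, since $\dim K[\Pc]=\dim\Pc+1$ for a single polytope but the $+1$'s do not simply add; writing everything in terms of Hilbert series of the $K$-algebras rather than Ehrhart series of the polytopes sidesteps this cleanly. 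A lighter-weight alternative that avoids step (iii) entirely: directly exhibit an interior lattice point of $(q-1)\Pc_{G'}$ by taking $\rho$-vectors summing appropriately over the two cycles, but the multiplicativity argument is more transparent and robust, so that is the route I would write up.
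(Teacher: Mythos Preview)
Your argument is correct and reaches the same numerical conclusion $\deg(\Pc_{G'})=2q-2\ge q$, but by a different route than the paper. The paper does precisely your ``lighter-weight alternative'': it directly exhibits the lattice point $\eb_1+\cdots+\eb_{4q}=\tfrac{1}{2}\sum_{i=1}^{4q}\rho(e_i)$ in the relative interior of $2q\,\Pc_{G'}$ (not $(q-1)\Pc_{G'}$ as you wrote in passing --- note $\codeg(\Pc_{G'})=2q$, since $\deg(\Pc_{G'})=2q-2$ and $\dim\Pc_{G'}=4q-3$), giving $\codeg(\Pc_{G'})\le 2q$ and hence $\deg(\Pc_{G'})\ge (4q-3)+1-2q=2q-2$ in two lines, with no Hilbert series, tensor products, or normality.

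Your algebraic route via $K[G']=K[C]\otimes_K K[C']$ and multiplicativity of Hilbert series is valid and in fact gives more (the exact $h^*$-polynomial rather than just a degree bound). But you should make explicit the one step you gloss over: to pass from the Hilbert series of the edge ring to the Ehrhart series of the edge polytope you need the edge ring to coincide with the Ehrhart ring, i.e.\ $\Pc_C$ and $\Pc_{G'}$ must have the integer decomposition property. This is true because edge polytopes of bipartite graphs are normal (Ohsugi--Hibi), but it is a genuine external input your argument requires, whereas the paper's explicit interior lattice point needs nothing beyond the definition of $\codeg$ and Lemma~\ref{lem:deg}. The trade-off: the paper's method is more elementary and self-contained; yours is more structural and explains \emph{why} the bound is exactly $2q-2$.
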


\begin{proof}
	Let $G'$ be a finite simple graph on $[4q]$ with the edge set
	$E(G')=\{ e_1,\ldots,e_{4q}\}$,
	where $e_i=\{i,i+1\}$ for $1 \leq i \leq 4q-1$ with $i \neq 2q$ and $e_{2q}=\{2q,1\}$ and $e_{4q}=\{4q,2q+1\}$.
	Namely, $G'$ is the disjoint union of two even cycles of length $2q$.
	Then one has $\dim \Pc_{G'}=4q-3$. 
	Since 
	\[
	\dfrac{1}{2}\sum_{i=1}^{4q} \rho (e_i)=\eb_1+\cdots+\eb_{4q} \in {\rm int}(2q \Pc_{G'}) \cap \ZZ^{4q},
	\]
	we obtain  ${\rm codeg}(\Pc_{G'}) \leq 2q$.
	Hence it follows that
	\[
	{\rm deg}(\Pc_{G'})=\dim \Pc_{G'} + 1 - {\rm codeg}(\Pc_{G'}) \geq 4q-3+1-2q=2q-2 \geq q.
	\]
	Therefore, from Lemma \ref{lem:deg}, one has $\deg (\Pc_G) \geq \deg(\Pc_{G'}) \geq q$, as desired.
\end{proof}

\begin{Lemma}
\label{lem:twocycles}
If $G$ has two even cycles of length $2q$ which have precisely one common vertex, then we obtain $\deg \Pc_G \geq q$.
\end{Lemma}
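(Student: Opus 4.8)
The plan is to argue exactly as in the proof of Lemma~\ref{lem:disjoint}, replacing the disjoint union of two $2q$-cycles by their one-point union. By Lemma~\ref{lem:deg} it suffices to exhibit a subgraph $G' \subset G$ realizing the hypothesis and to show $\deg(\Pc_{G'}) \geq q$. So I would take $G'$ to be the subgraph of $G$ whose edge set is the union of the edge sets of the two given $2q$-cycles; as they meet in precisely one vertex, $G'$ is, after relabeling, the graph on $[4q-1]$ formed by a $2q$-cycle through $v_1,\dots,v_{2q}$ and a second $2q$-cycle through $v_1,w_2,\dots,w_{2q}$, with $4q$ edges in total. Since $G'$ is connected and its only cycles are the two even ones (so $G'$ has no odd cycle and is bipartite), one has $\dim \Pc_{G'} = (4q-1)-2 = 4q-3$.

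The next step is to locate an interior lattice point of a small dilate of $\Pc_{G'}$. Assigning weight $\tfrac12$ to each of the $4q$ edges of $G'$ produces the point
\[
\frac{1}{2}\sum_{e \in E(G')}\rho(e) \;=\; 2\eb_{v_1} + \sum_{j=2}^{2q}\eb_{v_j} + \sum_{j=2}^{2q}\eb_{w_j} \;\in\; \ZZ^{4q-1},
\]
which is integral precisely because every vertex of $G'$ has even degree --- $v_1$ has degree $4$ and all other vertices degree $2$. This is a strictly positive combination of all the $\rho(e)$, $e\in E(G')$, with coefficient sum $2q$, hence it lies in $\text{int}(2q\,\Pc_{G'})$, so $\codeg(\Pc_{G'}) \leq 2q$. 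Therefore
\[
\deg(\Pc_{G'}) = \dim\Pc_{G'} + 1 - \codeg(\Pc_{G'}) \;\geq\; (4q-3)+1-2q \;=\; 2q-2 \;\geq\; q,
\]
the last inequality because $q\geq 3$; combining this with Lemma~\ref{lem:deg} gives $\deg(\Pc_G)\geq\deg(\Pc_{G'})\geq q$.

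There is no real obstacle beyond this elementary computation: the key point is simply that a one-point union of even cycles still has all vertex degrees even, so the half-sum of the $\rho(e)$ stays a lattice point, and the dilation factor $2q$ is small enough compared with $\dim\Pc_{G'}=4q-3$ to force $\deg\geq q$. The only things to check carefully are that $G'$ (taken as the union of just the two cycles) is indeed a subgraph of $G$ and that its dimension is correctly given by the connected-bipartite formula, both of which are immediate. I would expect that the remaining cases --- two $2q$-cycles sharing more than one vertex --- are the ones forcing a subtler choice of edge weights, since there the common vertices can have odd degree.
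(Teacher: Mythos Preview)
Your proof is correct and follows essentially the same route as the paper's: pass to the subgraph $G'$ given by the two cycles, use $\dim\Pc_{G'}=4q-3$, and exhibit the lattice point $2\eb_{v_1}+\sum_{v\neq v_1}\eb_v\in\text{int}(2q\,\Pc_{G'})$ to get $\codeg(\Pc_{G'})\le 2q$ and hence $\deg(\Pc_{G'})\ge 2q-2\ge q$. The only cosmetic difference is that the paper expresses this same lattice point via alternating $(\tfrac13,\tfrac23)$ weights on the edges rather than your uniform $\tfrac12$ weights; your parity observation (all vertex degrees even) makes the integrality transparent and is arguably cleaner.
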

\begin{proof}
 Let $G'$ be a connected finite simple graph on $[4q-1]$ with the edge set $E(G')=\{e_1,\ldots, e_{4q} \}$, where 
	\[
	e_i=\begin{cases}
		\{i,i+1\} & 1 \leq i \leq 2q-1,\\
		\{2q,1 \} & i=2q,\\
		\{1,2q+1\} &i=2q+1,\\
		\{i-1,i\} & 2q+2 \leq i \leq 4q-1,\\
		\{4q-1,1\} & i=4q.
	\end{cases}
	\]

Since $G'$ is connected and bipartite, It follows that $\dim \Pc_{G_k}=4q-3$.
Since 
\begin{displaymath}
	\begin{aligned}
		&	\sum_{i=1}^{2q} \left(\dfrac{1}{3} \rho(e_{2i-1}) + \dfrac{2}{3}\rho(e_{2i})\right)\\
		=&2\eb_1+\eb_2+\cdots+\eb_{4q-1} \in {\rm int}(2q\Pc_{G'}) \cap \ZZ^{4q-1},
	\end{aligned}
\end{displaymath}
one has
	\[
	{\rm deg}(\Pc_{G'}) \geq 4q-3+1 -2q=2q-2 \geq q.
	\]
Hence from Lemma \ref{lem:deg} we obtain $\deg \Pc_G \geq \deg \Pc_{G'} \geq q$, as desired.
	\end{proof}
	
	\begin{Lemma}
\label{lem:even}
Given positive integers $k$ and $m$ with $k+m \geq q$ and $k \leq m$,  let $G^{(e)}_{k,m}$ be a connected finite simple bipartite graph on $[2q+2m-1]$ with the edge set $E(G^{(e)}_{k,m})=\{e_1,\ldots,e_{2q+2m} \}$, where 	\[
	e_i=\begin{cases}
		\{i,i+1\} & 1 \leq i \leq 2q-1,\\
		\{2q,1 \} & i=2q,\\
		\{1,2q+1\} &i=2q+1,\\
		\{i-1,i\} & 2q+2 \leq i \leq 2q+2m-1,\\
		\{2q-2m-1,2k+1\} & i=2q+2m,
	\end{cases}
	\]
see {\rm FIGURE 1}.
Then we obtain $\deg \Pc_{G^{(e)}_{k,m}} \geq q$.
	\end{Lemma}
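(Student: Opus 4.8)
The plan is to bound $\deg(\Pc_{G^{(e)}_{k,m}})$ from below by combining the Ehrhart identity $\deg(\Pc)=\dim\Pc+1-\codeg(\Pc)$ with a single explicit interior lattice point. Since $G^{(e)}_{k,m}$ is connected and bipartite on the vertex set $[2q+2m-1]$, one has $\dim\Pc_{G^{(e)}_{k,m}}=2q+2m-3$. Hence it is enough to produce a point of $\ZZ^{2q+2m-1}$ lying in ${\rm int}\bigl((q+m)\Pc_{G^{(e)}_{k,m}}\bigr)$: this gives $\codeg(\Pc_{G^{(e)}_{k,m}})\leq q+m$, hence $\deg(\Pc_{G^{(e)}_{k,m}})\geq(2q+2m-3)+1-(q+m)=q+m-2$, and since $2m\geq k+m\geq q\geq 3$ forces $m\geq 2$, we conclude $\deg(\Pc_{G^{(e)}_{k,m}})\geq q$, as claimed.

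To build the point, observe that $G^{(e)}_{k,m}$ is the union of three paths joining the vertex $1$ to the vertex $2k+1$, of even lengths $2k$ (the edges $e_1,\dots,e_{2k}$), $2q-2k$ (the edges $e_{2k+1},\dots,e_{2q}$), and $2m$ (the edges $e_{2q+1},\dots,e_{2q+2m}$); the first two form the cycle $e_1,\dots,e_{2q}$ and the third is the appended path. On each of the three paths I would weight the edges alternately by $\tfrac13$ and $\tfrac23$, choosing the pattern on each path so that the edge incident to the vertex $1$ receives weight $\tfrac13$; since every path has even length, the edge incident to the vertex $2k+1$ then receives weight $\tfrac23$ on all three paths. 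A direct bookkeeping then gives
\begin{multline*}
\sum_{i=1}^{k}\Bigl(\tfrac13\rho(e_{2i-1})+\tfrac23\rho(e_{2i})\Bigr)+\sum_{i=1}^{q-k}\Bigl(\tfrac23\rho(e_{2k+2i-1})+\tfrac13\rho(e_{2k+2i})\Bigr)\\+\sum_{i=1}^{m}\Bigl(\tfrac13\rho(e_{2q+2i-1})+\tfrac23\rho(e_{2q+2i})\Bigr)=\sum_{j=1}^{2q+2m-1}\eb_j+\eb_{2k+1},
\end{multline*}
and the left-hand side is a combination of $\rho(e_1),\dots,\rho(e_{2q+2m})$ with strictly positive coefficients summing to $k+(q-k)+m=q+m$, so the right-hand side is an integer point of ${\rm int}\bigl((q+m)\Pc_{G^{(e)}_{k,m}}\bigr)$, as wanted.

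The one point requiring care is the bookkeeping at the two degree-$3$ vertices $1$ and $2k+1$: a single $\tfrac13,\tfrac23$-alternation run straight along $e_1,e_2,\dots$ would land a non-integer weighted degree at the vertex $1$, because the three paths leave it with different orientations relative to the edge numbering — this is why the middle block above is flipped to $\tfrac23,\tfrac13$. With the pattern chosen as described, each degree-$2$ vertex meets one edge of weight $\tfrac13$ and one of weight $\tfrac23$ (weighted degree $1$), the vertex $1$ meets three edges of weight $\tfrac13$ (weighted degree $1$), and the vertex $2k+1$ meets three edges of weight $\tfrac23$ (weighted degree $2$); the resulting weighted-degree vector is exactly the right-hand side displayed above. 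Everything else — the values of $\dim$, $\codeg$, and $\deg$ — is the routine computation of the first paragraph, so I expect this consistency check at the two branch vertices to be the main (and essentially only) obstacle.
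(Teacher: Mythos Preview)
Your proof is correct and follows essentially the same route as the paper: the same interior lattice point $\sum_j \eb_j + \eb_{2k+1}$ is exhibited via the same $\tfrac13/\tfrac23$ alternating weights (with the middle block flipped), giving $\codeg\le q+m$ and hence $\deg\ge q+m-2$. Your closing inequality is in fact tidier than the paper's: from $k\le m$ and $k+m\ge q\ge 3$ you extract $m\ge 2$ directly, so $q+m-2\ge q$ without the paper's detour through $m\ge q/2$ and the case split $q=3$ versus $q\ge 4$.
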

		\begin{figure}[h]
	\label{fig:even}
	\caption{$G^{(e)}_{k,m}$}
	\begin{tikzpicture}
	\coordinate (v1) at (90:3) node at (v1) [above] {$1$};
	\coordinate (v2) at (120:3) node at (v2) [above] {$2$};
	\coordinate (v3) at (150:3) node at (v3) [left] {$3$};
	\coordinate (v4) at (210:3) node at (v4) [left] {$2k-1$};
	\coordinate (v5) at (240:3) node at (v5) [below] {$2k$};
	\coordinate (v6) at (270:3) node at (v6) [below] {$2k+1$};
	\coordinate (v7) at (300:3) node at (v7) [right] {$2k+2$};
	\coordinate (v8) at (330:3) node at (v8) [right] {$2k+3$};
	\coordinate (v9) at (30:3) node at (v9) [right] {$2q-1$};
	\coordinate (v10) at (60:3) node at (v10) [above] {$2q$};
	\coordinate (v11) at (90:2) node at (v11) [right] {$2q+1$};
	\coordinate (v12) at (90:1) node at  (v12) [right] {$2q+2$};
	\coordinate (v13) at (270:1) node at (v13) [right] {$2q-2m-2$};
	\coordinate (v14) at (270:2) node at (v14) [right] {$2q-2m-1$};
	\coordinate (v15) at (90:0.75);
	\coordinate (v16) at (270:0.75);
	\coordinate (e1) at (105:3) node at (e1) [above] {$e_1$};
	\coordinate (e2) at (130:3) node at (e2) [left] {$e_2$};
	\coordinate (e3) at (230:3) node at (e3) [left] {$e_{2k-1}$};
	\coordinate (e4) at (254:3) node at (e4) [below] {$e_{2k}$};
	\coordinate (e5) at (290:3) node at (e5) [below] {$e_{2k+1}$};	
	\coordinate (e6) at (314:3) node at (e6) [right] {$e_{2k+2}$};	
	\coordinate (e7) at (45:3) node at (e7) [right] {$e_{2q-1}$};
	\coordinate (e8) at (75:3) node at (e8) [above] {$e_{2q}$};
	\coordinate (e9) at (90:2.5) node at (e9) {$e_{2q+1}$};
	\coordinate (e10) at (90:1.5) node at (e10)  {$e_{2q+2}$};
	\coordinate (e11) at (270:1.5) node at (e11)  {$e_{2q+2m-1}$};	
	\coordinate (e12) at (270:2.5) node at (e12)  {$e_{2q+2m}$};	
	\draw 
	(20:3) arc (20:160:3cm)
	(200:3) arc (200:340:3cm)
	(v1)--(v15)
	(v6)--(v16)
	;
\fill 
(v1) circle (2pt)
(v2) circle (2pt)
(v3) circle (2pt)
(v4) circle (2pt)
(v5) circle (2pt)
(v6) circle (2pt)
(v7) circle (2pt)
(v8) circle (2pt)
(v9) circle (2pt)
(v10) circle (2pt)
(v11) circle (2pt)
(v12) circle (2pt)
(v13) circle (2pt)
(v14) circle (2pt)
(180:3) circle (1pt)
(170:3) circle (1pt)
(190:3) circle (1pt)
(350:3) circle (1pt)
(0:3) circle (1pt)
(10:3) circle (1pt)
(0:0) circle (1pt)
(90:0.5) circle (1pt)
(270:0.5) circle (1pt)
;
    \end{tikzpicture}	
	\end{figure}
	
	\begin{proof}
	Since $\Pc_{G^{(e)}_{k,m}}$ is bipartite and connected, one has $\dim \Pc_{G^{(e)}_{k,m}}=2q+2m-3$.
		Moreover, since
	\begin{displaymath}
	\begin{aligned}
		&	\sum_{i=1}^{k} \left(\dfrac{1}{3} \rho(e_{2i-1}) + \dfrac{2}{3}\rho(e_{2i}) \right)\\+&\sum_{i=k+1}^{q} \left(\dfrac{2}{3} \rho(e_{2i-1}) + \dfrac{1}{3}\rho(e_{2i})\right)
		\\+&\sum_{i=q+1}^{q+m} \left(\dfrac{1}{3} \rho(e_{2i-1}) + \dfrac{2}{3}\rho(e_{2i})\right)	\\=&\eb_1+\eb_2+\cdots+\eb_{2q+2m-1}+\eb_{2k+1} \in {\rm int}((q+m)\Pc_{G^{(e)}_{k,m}}) \cap \ZZ^{2q+2m-1}
	\end{aligned}
\end{displaymath}
and since $m \geq q/2$, 
we obtain $\deg \Pc_{G^{(e)}_{k,m}} \geq 2q+2m-3+1-q-m=q+m-2 \geq  3q/2 -2$.
If $q=3$, then $\deg \Pc_{G^{(e)}_{k,m}} \geq 3 > 5/2$. Moreover, if $q \geq 4$, then $\deg \Pc_{G^{(e)}_{k,m}} 3q/2 -2 \geq q$, as desired.
	\end{proof}
	
	\begin{Lemma}
		\label{lem:odd}
		Given positive integers $k$ and $m$ with $k+m-1 \geq q$ and $k \leq m$,  let $G^{(o)}_{k,m}$ be a connected finite simple bipartite graph on $[2q+2m-2]$ with the edge set $E(G^{(o)}_{k,m})=\{e_1,\ldots,e_{2q+2m-1} \}$, where 	\[
	e_i=\begin{cases}
		\{i,i+1\} & 1 \leq i \leq 2q-1,\\
		\{2q,1 \} & i=2q,\\
		\{1,2q+1\} &i=2q+1,\\
		\{i-1,i\} & 2q+2 \leq i \leq 2q+2m-2,\\
		\{2q-2m-2,2k\} & i=2q-2m-1,
	\end{cases}
	\]
see {\rm FIGURE 2}.
Then we obtain $\deg \Pc_{G^{(o)}_{k,m}} \geq q$.
	\end{Lemma}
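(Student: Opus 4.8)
The plan is to mirror the strategy of Lemma \ref{lem:even}, namely to exhibit an explicit lattice point in the relative interior of the $(q+m-1)$-th dilate of the root polytope $\Pc_{G^{(o)}_{k,m}}$, which forces $\codeg(\Pc_{G^{(o)}_{k,m}}) \leq q+m-1$ and hence bounds $\deg(\Pc_{G^{(o)}_{k,m}})$ from below. First I would record the dimension: since $G^{(o)}_{k,m}$ is connected and bipartite on $[2q+2m-2]$ vertices, one has $\dim \Pc_{G^{(o)}_{k,m}} = 2q+2m-4$. The graph consists of an even $2q$-cycle through the vertex $1$, an even $2m$-cycle also incident to $1$ (the path $e_{2q+1},\dots,e_{2q+2m-2}$ together with the chord $e_{2q+2m-1}$), where the chord attaches to vertex $2k$ instead of $2k+1$ as in the even case — this parity shift is precisely why the two cycles now share the single vertex $1$ differently and why the second cycle has length $2m$ rather than $2m+1$ (or the roles are swapped from Lemma \ref{lem:even}; in any case the construction is the odd analogue).

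Next I would write down a convex combination of the vertices $\rho(e_i)$, using rational weights $1/3$ and $2/3$ on alternating edges along each cycle, exactly as in the proof of Lemma \ref{lem:even}. On the $2q$-cycle I would split the sum at index $k$: take $\tfrac13\rho(e_{2i-1})+\tfrac23\rho(e_{2i})$ for $1\le i\le k$ and $\tfrac23\rho(e_{2i-1})+\tfrac13\rho(e_{2i})$ for $k+1\le i\le q$; on the $2m$-cycle (edges $e_{2q+1},\dots,e_{2q+2m-1}$, read as $m$ consecutive pairs) I would take weights $\tfrac13,\tfrac23$ so that the total weight used is $k+(q-k)+m = q+m$... but since we want the $(q+m-1)$-dilate I must be careful: one of the $2q+2m-1$ edges is the chord, so the correct bookkeeping is that the sum of all the chosen coefficients equals $q+m-1$, scaling the point into $\partial\big((q+m-1)\Pc_{G^{(o)}_{k,m}}\big)$, after which I verify the resulting integer vector — which should come out to $\eb_1+\cdots+\eb_{2q+2m-2}$ possibly with one coordinate (near vertex $2k$) equal to $2$ — lies in the \emph{relative interior} by checking that every $\rho(e_i)$ appears with strictly positive coefficient and that the affine span is respected. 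This yields $\codeg(\Pc_{G^{(o)}_{k,m}})\le q+m-1$, hence
\[
\deg(\Pc_{G^{(o)}_{k,m}}) \ge (2q+2m-4)+1-(q+m-1) = q+m-2.
\]

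Finally I would use the hypothesis $k+m-1\ge q$ together with $k\le m$ to conclude $m \ge q/2$ (indeed $q \le k+m-1 \le 2m-1$ gives $m \ge (q+1)/2$), so that $\deg(\Pc_{G^{(o)}_{k,m}}) \ge q+m-2 \ge q + (q+1)/2 - 2 = 3q/2 - 3/2$. For $q=3$ this gives $\deg \ge 3$, and for $q\ge 4$ it gives $3q/2 - 3/2 \ge q$ iff $q/2 \ge 3/2$, i.e. $q\ge 3$, so in all cases $\deg(\Pc_{G^{(o)}_{k,m}})\ge q$, as desired. The main obstacle I anticipate is getting the explicit interior lattice point exactly right: the index ranges in the three partial sums, the placement of the single ``doubled'' coordinate coming from the off-parity chord at vertex $2k$, and confirming that the coefficient of \emph{every} edge (including $e_{2q+2m-1}$, $e_{2q}$, $e_{2q+1}$) is strictly positive so that the point is genuinely in the relative interior rather than merely in the dilated polytope. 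Once that combinatorial computation is pinned down, the rest is the same arithmetic as in Lemmas \ref{lem:even} and \ref{lem:twocycles}.
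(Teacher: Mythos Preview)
Your proposal is correct and follows essentially the same approach as the paper: compute the dimension, exhibit an interior lattice point in the $(q+m-1)$-dilate via alternating $\tfrac13,\tfrac23$ weights, and finish with the arithmetic $\deg\ge q+m-2\ge 3q/2-3/2\ge q$. The only discrepancy is your guess about a doubled coordinate: the paper's combination (with the pairing shifted by one index at each of the three transition vertices $2k$, $1$, and the chord endpoint, picking up three stray $\tfrac13$'s that together contribute the missing~$1$) yields exactly $\eb_1+\cdots+\eb_{2q+2m-2}$ with no coordinate equal to~$2$.
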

	\begin{figure}[h]
	\label{fig:odd}
	\caption{$G^{(o)}_{k,m}$}
	\begin{tikzpicture}
	\coordinate (v1) at (90:3) node at (v1) [above] {$1$};
	\coordinate (v2) at (120:3) node at (v2) [above] {$2$};
	\coordinate (v3) at (150:3) node at (v3) [left] {$3$};
	\coordinate (v4) at (210:3) node at (v4) [left] {$2k-2$};
	\coordinate (v5) at (240:3) node at (v5) [below] {$2k-1$};
	\coordinate (v6) at (270:3) node at (v6) [below] {$2k$};
	\coordinate (v7) at (300:3) node at (v7) [right] {$2k+1$};
	\coordinate (v8) at (330:3) node at (v8) [right] {$2k+2$};
	\coordinate (v9) at (30:3) node at (v9) [right] {$2q-1$};
	\coordinate (v10) at (60:3) node at (v10) [above] {$2q$};
	\coordinate (v11) at (90:2) node at (v11) [right] {$2q+1$};
	\coordinate (v12) at (90:1) node at  (v12) [right] {$2q+2$};
	\coordinate (v13) at (270:1) node at (v13) [right] {$2q-2m-3$};
	\coordinate (v14) at (270:2) node at (v14) [right] {$2q-2m-2$};
	\coordinate (v15) at (90:0.75);
	\coordinate (v16) at (270:0.75);
	\coordinate (e1) at (105:3) node at (e1) [above] {$e_1$};
	\coordinate (e2) at (130:3) node at (e2) [left] {$e_2$};
	\coordinate (e3) at (230:3) node at (e3) [left] {$e_{2k-2}$};
	\coordinate (e4) at (255:3) node at (e4) [below] {$e_{2k-1}$};
	\coordinate (e5) at (290:3) node at (e5) [below] {$e_{2k}$};	
	\coordinate (e6) at (315:3) node at (e6) [right] {$e_{2k+1}$};	
	\coordinate (e7) at (45:3) node at (e7) [right] {$e_{2q-1}$};
	\coordinate (e8) at (75:3) node at (e8) [above] {$e_{2q}$};
	\coordinate (e9) at (90:2.5) node at (e9)  {$e_{2q+1}$};
	\coordinate (e10) at (90:1.5) node at (e10)  {$e_{2q+2}$};
	\coordinate (e11) at (270:1.5) node at (e11)  {$e_{2q+2m-2}$};	
	\coordinate (e12) at (270:2.5) node at (e12)  {$e_{2q+2m-1}$};	
	\draw 
	(20:3) arc (20:160:3cm)
	(200:3) arc (200:340:3cm)
	(v1)--(v15)
	(v6)--(v16)
	;
\fill 
(v1) circle (2pt)
(v2) circle (2pt)
(v3) circle (2pt)
(v4) circle (2pt)
(v5) circle (2pt)
(v6) circle (2pt)
(v7) circle (2pt)
(v8) circle (2pt)
(v9) circle (2pt)
(v10) circle (2pt)
(v11) circle (2pt)
(v12) circle (2pt)
(v13) circle (2pt)
(v14) circle (2pt)
(180:3) circle (1pt)
(170:3) circle (1pt)
(190:3) circle (1pt)
(350:3) circle (1pt)
(0:3) circle (1pt)
(10:3) circle (1pt)
(0:0) circle (1pt)
(90:0.5) circle (1pt)
(270:0.5) circle (1pt)
;
    \end{tikzpicture}	
	\end{figure}
	
	\begin{proof}
			Since $\Pc_{G^{(o)}_{k,m}}$ is bipartite and connected, one has $\dim \Pc_{G^{(o)}_{k,m}}=2q+2m-4$.
		Moreover, since
	\begin{displaymath}
	\begin{aligned}
		&	\sum_{i=1}^{k-1} \left(\dfrac{1}{3} \rho(e_{2i-1}) + \dfrac{2}{3}\rho(e_{2i}) \right)+\dfrac{1}{3}\rho(e_{2k-1})
		\\+&\sum_{i=k}^{q-1} \left(\dfrac{1}{3} \rho(e_{2i}) + \dfrac{2}{3}\rho(e_{2i+1})\right)+\dfrac{1}{3}\rho(e_{2q})\\
		+&\sum_{i=q}^{q+m-2} \left(\dfrac{1}{3} \rho(e_{2i+1}) + \dfrac{2}{3}\rho(e_{2i+2})\right)+\dfrac{1}{3}\rho(e_{2q+2m-1})	\\=&\eb_1+\eb_2+\cdots+\eb_{2q+2m-2}\in {\rm int}((q+m-1)\Pc_{G^{(o)}_{k,m}}) \cap \ZZ^{2q+2m-2}
	\end{aligned}
\end{displaymath}
and since $m \geq (q+1)/2$, 
we obtain $\deg \Pc_{G^{(o)}_{k,m}} \geq 2q+2m-4+1-q-m+1=q+m-2 \geq  3q/2 -3/2 \geq q$, as desired.
	\end{proof}
	
	Now, we prove Theorem \ref{thm:main}.
	\begin{proof}[Proof of Theorem \ref{thm:main}]
	Let $G$ be a connected finite simple bipartite graph such that $K[G]$ has a $q$-linear resolution, where $q \geq 3$. 
	It then follows that $G$ has no even cycles of length $< 2q$ and has at least one even cycle of length $2q$, and $\deg \Pc_G \leq q-1$ from Lemma \ref{lem:reg}.
	Assume that $G$ has two even cycles of length $2q$.
	From Lemma \ref{lem:disjoint} each pair of even cycles of length $2q$ in $G$ has a common vertex. If two even cycles of length $2q$ in $G$ has precisely one common vertex. It then from Lemma \ref{lem:twocycles} that $\deg \Pc_G \geq q$, a contradiction.
	Hence each pair of even cycles of length $2q$ in $G$ has at least two common vertices.
	Let $C_1$ and $C_2 (C_1 \neq C_2)$ be even cycles of length $2q$ in $G$ with common vertices $v_1$ and $v_2 (v_1 \neq v_2)$. 
Then each of $C_1$ and $C_2$ has two paths between $v_1$ and $v_2$.
Let $P_{i1}$ and $P_{i2}$ be two paths between $v_1$ and $v_2$ in $C_i$ for each $i=1,2$. Since $C_1 \neq C_2$, one has $\{P_{11},P_{12,}\} \neq \{P_{21}, P_{22} \}$.	Hence there are two vertices $w_1,w_2 \in C_1$ and a path $P$ of $G$ connecting $w_1$ and $w_2$ such that for each $w \in P$ with $w \neq w_1,w_2$, it holds $w \notin C_1$. 
Let $G'$ be the connected bipartite graph on $C_1 \cup P$ with the edge set $E(G')=E(C_1) \cup E(P)$.
Since $G$ has no odd cycles and no even cycles of length $<2n$, so does $G'$.
Hence $G'$ is $G^{(e)}_{k,m}$ or $G^{(o)}_{k,m}$ which appear in Lemmas \ref{lem:even} and \ref{lem:odd}, and hence one has $\deg \Pc_G \geq \deg \Pc_{G'} \geq q$, a contradiction.
Therefore, $G$ has precisely one even cycle of length $2q$.
Thus, $K[G]$ is a hypersurface, as desired. 
\end{proof}

\end{document}